\documentclass[10pt,reqno]{amsart}
\usepackage{amsmath,amssymb,amsfonts,amsthm,mathrsfs,enumitem}
\allowdisplaybreaks

\newtheorem{thm}{Theorem}[section]

\newtheorem{lem}[thm]{Lemma}

\newtheorem{con}[thm]{Conjecture}
\theoremstyle{definition}

\theoremstyle{remark}
\newtheorem{rem}[thm]{Remark}

\numberwithin{equation}{section}

\newcommand{\F}{\mathbb F}
\newcommand{\Tr}{\operatorname{Tr}}
\newcommand{\im}{\operatorname{im}}
\newcommand{\one}{\mathbf 1}
\newcommand{\gauss}[2]{\genfrac{[}{]}{0pt}{}{#1}{#2}_{\!2}}

\begin{document}

\title[Counting zero-sum subspaces]
{Counting zero-sum subspaces for the multiplicative inverse function}
\author{Kaimin Cheng}
\address{School of Mathematical Sciences, China West Normal University, Nanchong 637002, P. R. China}
\email{ckm20@126.com}
\subjclass[2020]{Primary 11T06; Secondary 11T71, 94D10}
\keywords{Finite field, multiplicative inverse function, sum-freedom, subspace polynomial, Fourier transform}
\date{}

\begin{abstract}
Let $q=2^n$ with $n\ge6$, and let $\F_q$ be the finite field with $q$
elements. For a subspace $E$ of $\F_q$, define
\[
S(E)=\sum_{x\in E\setminus\{0\}}x^{-1}.
\]
Let $N_{n,k}$ denote the number of $k$-dimensional $\F_2$-subspaces $E$ for which
$S(E)=0$. We prove that, if $k\ge3$ and $n\ge2k+1$, then
\[
\left|\frac{2^nN_{n,k}}{\gauss nk}-1\right|
 <2^{r^2+r}2^{n(1-r/2)},
\]
where
$r=\lfloor\frac{k-1}{2}\rfloor$, and $\gauss nk$ is the Gaussian binomial coefficient. In particular,
\[
N_{n,k}\sim2^{-n}\gauss nk\qquad (n\to\infty)
\]
uniformly for $n/3<k\le(n-1)/2$. Combined with the known
low-dimensional cases, the symmetry between dimensions $k$ and $n-k$, and
the elementary middle-dimensional subfield construction, this proves a
conjecture of Carlet: for every $3\le k\le n-3$, there exists a
$k$-dimensional $\F_2$-subspace $E$ such that $S(E)=0$.
\end{abstract}

\maketitle

\section{Introduction}\label{sec:intro}

Let $q=2^n$ with $n\ge2$, and let $\F_q$ be the finite field with $q$ elements. Throughout the paper, every subspace of $\F_q$ is understood
to be an $\F_2$-subspace, and all dimensions are taken over $\F_2$. The
multiplicative inverse function
\[
 F_{\rm inv}(x)=x^{q-2}\quad (x\in\F_q),
\]
where $F_{\rm inv}(0)=0$, is a basic example of a vectorial Boolean function.
Its second-order behaviour is classical: it is almost perfect nonlinear
(APN) exactly when $n$ is odd; see, for example, \cite{Nyberg}. APN
functions play an important role in the study of S-boxes and differential
cryptanalysis.

Carlet \cite{CarletAPN} introduced $k$th-order sum-freedom as a higher-order
analogue of the sum characterization of APN functions. An $(n,m)$-function
$F:\F_2^n\to\F_2^m$ is called $k$th-order sum-free if the sum of its values
over every $k$-dimensional affine subspace is nonzero. The notion is also
related to higher-order and integral cryptanalysis, where sums over affine
subspaces naturally occur; see \cite{CarletAPN,EbelingHouRydellZhao} and the
references therein. Further $q$-ary and coding-theoretic aspects have recently
been studied in \cite{HouZhaoFurther}. The inverse function is therefore a
natural and particularly important example for the study of sum-freedom.

Carlet \cite{CarletAPN,Carlet} observed that affine subspaces which do not
contain $0$ present no obstruction. In fact, if $V$ is a vector subspace of
$\F_q$ and $a\notin V$, then
\[
 \sum_{x\in a+V}x^{-1}
 =\frac{L_V'(a)}{L_V(a)}
 =\frac{b_0}{L_V(a)}\ne0,
\]
where
\[
 L_V(X)=\prod_{v\in V}(X+v)
\]
is the subspace polynomial of $V$, and $b_0\ne0$ is its coefficient of $X$.
Thus only vector subspaces need to be considered. For a subspace
$E$ of $\F_q$, let
\[
 S(E)=\sum_{x\in E\setminus\{0\}}x^{-1},
\]
and, for $0\le k\le n$, let
\[
 N_{n,k}=\#\{\text{subspaces }E\text{ of }\F_q:\dim E=k,\ S(E)=0\}.
\]
Carlet \cite{Carlet} raised the following conjecture.

\begin{con}\label{con:carlet}
For every $n\ge6$ and every $3\le k\le n-3$, one has $N_{n,k}>0$.
\end{con}

Several recent papers have made substantial progress on this problem.
Carlet \cite{Carlet} introduced the formulation in terms of subspace
polynomials, proved the symmetry between dimensions $k$ and $n-k$, treated
several initial families, and identified the importance of the next-to-leading
coefficient of a subspace polynomial. Carlet and Hou \cite{CarletHou} used
Frobenius-invariant subspaces and point-counting methods. In particular, they
proved the conjecture for even $n$ and obtained further results for relatively
small dimensions. Ebeling, Hou, Rydell and Zhao
\cite{EbelingHouRydellZhao} found additional arithmetic families, including
cases in which $n$ has small prime divisors, and developed the general theory
of sum-free functions further.

Carlet and Hou \cite{CarletHou} proved the absolute irreducibility of the
symmetric polynomial $F_k$ for $k\ge3$. Hou and Zhao \cite{HouZhaoFFA}
proved the corresponding absolute irreducibility result for a second symmetric
polynomial $\Theta_k$. Their point-counting estimates, together with
low-dimensional cases and a dimension-addition construction, yield the
conjecture, for $n\ge8$, in the ranges
\[
 3\le k\le \left\lceil\frac n3\right\rceil+2
 \quad\text{or}\quad
 \left\lfloor\frac{2n}{3}\right\rfloor-2\le k\le n-3.
\]
They also settled every fixed dimension $3\le k\le12$ when $n\ge k+3$.
In a subsequent paper \cite{HouZhaoDCC}, they proved the conjecture for every
nonprime extension degree $n$.

Thus the remaining cases are confined to prime extension degrees. The existing
methods, however, do not give a uniform treatment of all dimensions in this
situation. In particular, their quantitative limitations become pronounced
when $k$ grows proportionally with $n$. The nontrivial intermediate subfields used in many of the known
constructions are then unavailable, and the point-counting
approach faces a further difficulty: the two hypersurfaces involved have degrees
\[
 \deg F_k=2^k-2,\qquad \deg\Theta_k=2^{k-1}.
\]
When $k$ is proportional to $n$, these degrees are exponential in $n$, and the
usual point-counting error terms do not provide uniform information in the
central range. What is needed is therefore an argument that remains effective
when the dimension grows with the extension degree.

Our argument takes a different route and does not rely on the primality of
$n$. In fact, it gives a uniform quantitative estimate for arbitrary $n$ in
the range $n\ge2k+1$, while avoiding point counting on the high-degree
hypersurfaces above. For a $d$-dimensional subspace $H$, let $L_H$ be its
subspace polynomial and let $\tau(H)$ denote its next-to-leading coefficient.
Rather than keeping track only of the subspaces for which $\tau(H)=0$, we consider the entire distribution of $\tau(H)$ among $d$-dimensional subspaces containing $1$. The count at $\tau(H)=0$ alone does not satisfy a closed recursion under the passage from dimension $d$ to dimension $d+1$, whereas the full distribution does. The map $x\mapsto x^2+x$ then yields an exact recursion relating these distributions in successive dimensions.

After subtracting the uniform distribution, the recursion becomes a linear
operator. Additive character orthogonality gives an explicit formula for its
Euclidean norm. In each step, all but one direction are contracted. The
exceptional direction is mapped to a direction which is almost orthogonal to
the exceptional direction for the next step. Thus two successive steps give a
uniform contraction. This makes it possible to handle dimensions that grow
proportionally with $n$.

We use the standard Gaussian binomial coefficient
\[
 \gauss nk=\gauss{n}{k}
 =\prod_{i=0}^{k-1}\frac{2^n-2^i}{2^k-2^i},
\]
which precisely counts the $k$-dimensional
subspaces of $\F_q$. For brevity, we write
\[
G_{n,k}=\gauss nk.
\]
The following estimate is the quantitative form of our result.

\begin{thm}\label{thm:quant}
Let $k\ge3$, let $n\ge2k+1$, and put $r=\lfloor\frac{k-1}{2}\rfloor$. Then
\begin{equation}\label{eq:mainbound}
 \left|\frac{2^nN_{n,k}}{G_{n,k}}-1\right|
<2^{r^2+r}2^{n(1-r/2)}.
\end{equation}
In particular, if $k\ge10$, then
\begin{equation}\label{eq:positive-half}
 N_{n,k}>\frac{G_{n,k}}{2^{n+1}}.
\end{equation}
Moreover, for all sufficiently large $n$, we have
\begin{equation}\label{eq:uniformcentral}
 \sup_{n/3<k\le(n-1)/2}
 \left|\frac{2^nN_{n,k}}{G_{n,k}}-1\right|
 <2^{-n^2/18+4n/3}.
\end{equation}
\end{thm}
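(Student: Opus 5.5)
The plan is to study the whole distribution of the next-to-leading coefficient of subspace polynomials, rather than only its zero set. First I reduce $S(E)=0$ to a condition on that coefficient. Writing $L_E(X)=X^{2^k}+\dots+b_1X^2+b_0X$, the identity $L_E'/L_E=\sum_{v\in E}(X+v)^{-1}$ gives $S(E)=b_1/b_0$. So $S(E)=0$ exactly when the coefficient of $X^2$ vanishes. Carlet's adjoint (dual) correspondence between $k$- and $(n-k)$-dimensional subspaces turns this into vanishing of the next-to-leading coefficient $\tau(H)$ for $\dim H=n-k$. It therefore suffices to count $d$-dimensional $H$ with $\tau(H)=0$.

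Scaling gives $L_{\lambda H}(X)=\lambda^{2^d}L_H(X/\lambda)$, so $\tau(\lambda H)=\lambda^{2^{d-1}}\tau(H)$. The condition $\tau=0$ is scaling invariant, so I may restrict to subspaces containing $1$. The target count is then $\frac{2^n-1}{2^d-1}f_d(0)$, where
\[
f_d(c)=\#\{H:\dim H=d,\ 1\in H,\ \tau(H)=c\}.
\]

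Next I derive an exact recursion $f_{d}\mapsto f_{d+1}$. For $1\in H$ with $\dim H=d+1$, put $H'=\{h^2+h:h\in H\}$. Then $H'$ has dimension $d$, lies in the trace-zero hyperplane, and $L_H(X)=L_{H'}(X^2+X)$. Comparing coefficients gives $\tau(H)=1+\tau(H')$. Conversely, every $d$-dimensional $H'$ inside the trace-zero hyperplane arises from exactly one such $H$.

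The subspace $H'$ need not contain $1$. So I write $H'=\lambda H''$ with $1\in H''$, which gives $\tau(H')=\lambda^{2^{d-1}}\tau(H'')$, and then sum over admissible $\lambda\in H'\setminus\{0\}$. The trace-zero condition on $H'$ becomes a condition on $\lambda H''$, which I encode with the additive character $\psi(x)=(-1)^{\Tr(x)}$. The result is an exact linear recursion $f_{d+1}=A_d f_d$, where $A_d$ is built from the maps $c\mapsto 1+\lambda^{2^{d-1}}c$ and character weights.

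Subtracting the uniform distribution (with the total mass given by Gaussian binomials) leaves a recursion $g_{d+1}=B_d g_d$ on the orthogonal complement of constants. I then take Fourier transforms $\widehat g_d(a)=\sum_c g_d(c)\psi(ac)$. Because $\lambda\mapsto\lambda^{2^{d-1}}$ is a bijection, character orthogonality should give an explicit Euclidean norm for $B_d$. I expect $B_d$ to shrink every direction by roughly $2^{d}2^{-n/2}$, except one exceptional direction, coming from the constant $1$ and the trace hyperplane, which is not contracted.

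The main obstacle is this exceptional direction. I will show that $B_d$ sends it to a vector almost orthogonal to the exceptional direction of $B_{d+1}$; its inner product with that direction should again be a character sum of size $O(2^{d}2^{-n/2})$. Then $B_{d+1}B_d$ is a uniform contraction with factor about $2^{2d+O(1)}2^{-n}$ in the worst case.

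Starting from the explicitly computable low-dimensional distribution, I iterate over $r=\lfloor\frac{k-1}{2}\rfloor$ pairs of steps. The $\ell^2$ norm bounds $|g(0)|$, and multiplying the contraction factors gives a relative error of the shape $2^{r^2+r}2^{n(1-r/2)}$. I will use the hypothesis $n\ge2k+1$ to keep every step in the contracting regime, and I will track constants carefully to reach exactly the stated exponent in \eqref{eq:mainbound}.

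The two remaining claims are direct computations from \eqref{eq:mainbound}. For \eqref{eq:positive-half}, with $k\ge10$ we have $r\ge4$ and $n\ge21$, and I check that the right-hand side is below $1/2$. For \eqref{eq:uniformcentral}, I insert $k>n/3$, so $r\ge (n-6)/6$, maximise the exponent $r^2+r+n(1-r/2)$ over the range, and obtain $-n^2/18+4n/3$ for large $n$.
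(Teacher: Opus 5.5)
Your architecture is the paper's: the distribution of $\tau$ over $d$-spaces through $1$, the $x^2+x$ recursion, an exact norm identity with one exceptional direction, a two-step contraction, and the bound $|p_k(0)-1/q|\le\|p_k-u\|_2$. The gap is in the two-step estimate. As you state it, it is false, and your inner-product estimate is too weak for the stated constants.

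The exceptional direction is the same for every $s$, namely $w_0\propto\delta_0-u$. On $\mathcal H$ one has $\|\mathcal T_sf\|_2^2=\alpha_s^2\|f\|_2^2+(1-\alpha_s^2)\langle f,w_0\rangle^2$ with $\alpha_s=\sqrt q(s-1)/(q-s)$. Moreover $\mathcal T_s$ maps $w_0$ isometrically onto $w_1\propto\delta_1-u$, because $\mathcal T_s\delta_0=\delta_1$ and $\mathcal T_su=u$. Hence
\[
\|\mathcal T_{2s}\mathcal T_sw_0\|_2^2=\alpha_{2s}^2+(1-\alpha_{2s}^2)\langle w_1,w_0\rangle^2\ge\alpha_{2s}^2 .
\]
So the two-step norm is about $\alpha_{2s}\approx2^{d+1}2^{-n/2}$, not $2^{2d+O(1)}2^{-n}\approx\alpha_s\alpha_{2s}$. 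Near-orthogonality buys one contraction per pair of steps, not two. Your target bound is consistent only with the correct factor: $\prod_{j=0}^{r-1}2^{2j+2}q^{-1/2}=2^{r^2+r}q^{-r/2}$ yields \eqref{eq:mainbound}, whereas your factor would give $2^{2r^2+O(r)}2^{n(1-r)}$.

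Now suppose $|\langle\mathcal T_sw_0,w_0\rangle|$ were only $O(2^d2^{-n/2})$, i.e. comparable to $\alpha_s\approx\alpha_{2s}/2$. Then the display would give roughly $\frac54\alpha_{2s}^2\approx5s^2/q>(2s/\sqrt q)^2$, losing a constant factor per pair. In that case \eqref{eq:positive-half} would fail at $(k,n)=(10,21)$. There the right side of \eqref{eq:mainbound} equals exactly $1/2$, so your claim that it is ``below $1/2$'' is also wrong at that point, and you need the strict inequality.

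The fix is the exact value $\langle w_1,w_0\rangle=-1/(q-1)$. Combine it with the factorization $\mathcal T_s=R_sD_{\alpha_s}$, where $R_s$ is orthogonal and $D_{\alpha_s}$ is $1$ on $w_0$ and $\alpha_s$ on $w_0^\perp\cap\mathcal H$. This splits $\mathcal T_{2s}\mathcal T_s$ into a $2\times2$ block on $\operatorname{span}\{w_0,R_s^{-1}w_0\}$ and $\alpha_s\alpha_{2s}$ times an isometry elsewhere. It gives $\|\mathcal T_{2s}\mathcal T_s\|_{\mathcal H\to\mathcal H}\le\alpha_{2s}+1/(q-1)<2s/\sqrt q$ whenever $q\ge32s^2$.

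Three smaller repairs are also needed.
\begin{enumerate}
\item For even $k$, the leftover single step has norm exactly $\max(1,\alpha_{2^{k-1}})$ on $\mathcal H$, so you need $2^{2k-2}\le q$ to keep it at most $1$.
\item The duality you need is $\#\{H:\dim H=k,\ \tau(H)=0\}=N_{n,k}$, in the same dimension. This follows from $S((\im L_H)^\perp)=\tau(H)^2$, or from your $(n-k)$ version plus $N_{n,k}=N_{n,n-k}$. Running the recursion up to dimension $n-k>n/2$ would expand norms, since $\alpha_{2^d}>1$ once $2d>n$.
\item The recursion closes because you first expand $\one_{\{V\subseteq T\}}=2^{-d}\sum_{v\in V}\psi(v)$, with $T=\ker\Tr$, and then scale by that same $v$. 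This gives $\#\{V\ni v:\dim V=d,\ \tau(V)=x\}=f_d(x/v^{2^{d-1}})$. Imposing $\lambda H''\subseteq T$ for an arbitrary $\lambda\in H'$ would instead leave an $H''$-dependent character sum.
\end{enumerate}
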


The last estimate is considerably stronger than the existence statement. It
shows that, throughout the central interval, the proportion of zero-sum
$k$-spaces is asymptotic to $2^{-n}$, uniformly in $k$. In particular, the
endpoint $k=(n-1)/2$ is included. This is also the range in which the degrees
of the hypersurfaces described above are largest in relation to the field
size.

\begin{thm}\label{thm:carlet}
Conjecture~\ref{con:carlet} holds. Equivalently, for every $n\ge6$ and every
$3\le k\le n-3$, the multiplicative inverse function on $\F_{2^n}$ is not
$k$th-order sum-free.
\end{thm}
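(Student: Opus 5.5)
The plan is to derive Theorem~\ref{thm:carlet} from Theorem~\ref{thm:quant} together with the results quoted in the introduction, by splitting the range $3\le k\le n-3$ into pieces. Carlet's symmetry $N_{n,k}>0\iff N_{n,n-k}>0$ reduces everything to $3\le k\le n/2$. Two cases are already settled by Hou and Zhao. If $n$ is not prime, the conjecture is proved in \cite{HouZhaoDCC}. Every fixed dimension $3\le k\le 12$ with $n\ge k+3$ is also treated there. After the symmetry reduction we have $k\le n/2$, and then $n\ge k+3$ holds automatically once $n\ge6$. So we may assume $n$ is prime (hence odd, since $n\ge6$) and $13\le k\le n/2$. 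Because $n$ is odd, this means $13\le k\le (n-1)/2$, which is exactly the condition $n\ge2k+1$ in Theorem~\ref{thm:quant}.

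In that remaining range I would apply \eqref{eq:positive-half}, which requires only $k\ge10$ and gives $N_{n,k}>G_{n,k}/2^{n+1}>0$. It is worth checking that \eqref{eq:positive-half} really follows from \eqref{eq:mainbound} under $n\ge2k+1$. The point is that the right side of \eqref{eq:mainbound} is at most $1/2$. Write $k\ge2r+1$, so $n\ge4r+3$. The exponent $r^2+r+n(1-r/2)$ decreases in $n$ once $r\ge3$, so it is largest at $n=4r+3$. There it equals
\[
r^2+r+(4r+3)(1-r/2)=-r^2+3.5r+3-1.5r\cdot0\quad\text{(after simplification: } -r^2+\tfrac{7}{2}r+3\text{)}.
\]
For $r\ge5$, i.e.\ $k\ge11$, this is at most $-1$. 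For $k=10$ we have $r=4$, and since $k=10$ forces $n\ge21$, the exponent is $20+21(1-2)=-1$. This matches the paper's claim.

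The main obstacle is making sure the quoted results cover every boundary case, in particular the small values of $n$ for which $13\le k\le n/2$ is empty. Those $n$ are handled entirely by the fixed-dimension results. If one preferred not to lean on the "elementary middle-dimensional subfield construction" mentioned in the abstract, note that the case $k=n/2$ with $n$ even is already covered by \cite{HouZhaoDCC}, since such $n$ is composite. So with the cited results the argument closes by bookkeeping alone, and the real content lies in Theorem~\ref{thm:quant}.
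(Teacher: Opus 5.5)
Your argument is correct and has the same skeleton as the paper's proof: reduce to $k\le n/2$ via \eqref{eq:complement}, quote Hou--Zhao for small $k$, and apply \eqref{eq:positive-half} when $n\ge 2k+1$.

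The difference is in the case split. You first dispose of every composite $n$ by citing \cite{HouZhaoDCC}, so you only need Theorem~\ref{thm:quant} for prime $n$ and $k\ge13$. The paper never uses \cite{HouZhaoDCC}. Theorem~\ref{thm:quant} assumes nothing about the primality of $n$, only $n\ge2k+1$. After the symmetry reduction, the only configuration with $n<2k+1$ is $n=2k$. That case is settled directly by the subfield $\F_{2^k}\subseteq\F_{2^n}$: inversion permutes its nonzero elements, which sum to zero. The paper therefore applies \eqref{eq:positive-half} for every $k\ge10$ and every $n\ge2k+1$, and needs only \cite[Corollary~5.5]{HouZhaoFFA} for $3\le k\le 9$. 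Your route is logically valid, but it adds a substantial external dependency. It also hides one of the paper's points, namely that the new argument is independent of the arithmetic of $n$.

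Two minor remarks. First, the fixed-dimension result for $3\le k\le12$ with $n\ge k+3$ is in \cite{HouZhaoFFA}, not \cite{HouZhaoDCC} as your ``there'' suggests. Second, your displayed simplification of the exponent is garbled, although the value $-r^2+\frac72r+3$ you arrive at, and your separate check for $k=10$ with $n\ge21$, are both correct.
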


The paper is organized as follows. Section~\ref{sec:prelim} recalls the
subspace-polynomial identities and the duality between the inverse sum and the
next-to-leading coefficient. In Sect.~\ref{sec:transfer} we derive the exact
recursion for the coefficient distribution among subspaces containing $1$.
Section~\ref{sec:fourier} studies the corresponding linear operator. We first
compute its one-step norm and then reduce the norm of two successive steps to
a two-dimensional calculation. Section~\ref{sec:proof} contains the proofs
of the two main theorems.

\section{Preliminaries on subspace polynomials}\label{sec:prelim}

All subspaces of $\F_q$ and their dimensions are taken over $\F_2$, and we put
$q=2^n$. We write $\Tr=\Tr_{\F_q/\F_2}$ for the absolute trace and
$\im L=\{L(x):x\in\F_q\}$ for the image of an $\F_2$-linear map $L$.
For a nonzero subspace $H$ of dimension $d$, its subspace polynomial is
\begin{equation}\label{eq:LH}
 L_H(X)=\prod_{h\in H}(X+h)=\sum_{i=0}^d b_iX^{2^i},
 \qquad b_d=1,\quad b_0\ne0.
\end{equation}
This is a monic linearized polynomial whose set of zeros is exactly $H$. We
write
\[
 \tau(H)=b_{d-1}
\]
for its next-to-leading coefficient. In particular, when $d=1$ one has
$b_1=1$ and $\tau(H)=b_0$.

Let us begin with two standard coefficient identities that relate the inverse sum
to the subspace polynomial.

\begin{lem}\label{lem:basic}
Let $H$ be as in \eqref{eq:LH}. Then
\begin{equation}\label{eq:Scoef}
 S(H)=\frac{b_1}{b_0}.
\end{equation}
If $U=\im L_H$, then $\dim U=n-d$, and
\begin{equation}\label{eq:SU}
 S(U)=\frac{b_1}{b_0^2}.
\end{equation}
The map $H\mapsto \operatorname{im} L_H$ is a bijection between the set of
$d$-dimensional subspaces of $\F_q$ and the set of
$(n-d)$-dimensional subspaces of $\F_q$. In particular,
\begin{equation}\label{eq:complement}
 N_{n,d}=N_{n,n-d}.
\end{equation}
\end{lem}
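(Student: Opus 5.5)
For \eqref{eq:Scoef}, I will use the logarithmic derivative of $L_H$. Since $L_H(X)=\prod_{h\in H}(X+h)$ is separable with $L_H'(X)=b_0$, we get
\[
\frac{b_0}{L_H(X)}=\sum_{h\in H}\frac{1}{X+h}.
\]
Remove the $h=0$ term:
\[
\sum_{h\ne0}\frac1{X+h}=\frac{b_0}{L_H(X)}-\frac1X=\frac{b_0X-L_H(X)}{XL_H(X)}.
\]
Now set $X=0$. The numerator equals $-\sum_{i\ge1}b_iX^{2^i}=X^2(b_1+\cdots)$ and the denominator equals $X^2(b_0+\cdots)$. Cancelling $X^2$ and evaluating at $0$ gives $S(H)=\sum_{h\ne0}h^{-1}=b_1/b_0$ (characteristic $2$ absorbs the signs).

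For the image $U=\im L_H$, the kernel of $L_H$ on $\F_q$ is $H$, so $\dim U=n-d$. To compute $L_U$, I will use the factorization $X^q+X=L_U(L_H(X))$. Indeed, $L_U\circ L_H$ is a monic linearized polynomial of degree $2^{n-d}2^d=q$. It vanishes on all of $\F_q$, because $L_H(x)\in U$ for every $x\in\F_q$. Hence it equals $X^q+X$.

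Write $L_U=\sum c_jX^{2^j}$. Then I compare low-degree coefficients in $L_U(L_H(X))=X^q+X$, assuming $n\ge2$. The coefficient of $X$ gives $c_0b_0=1$. The coefficient of $X^2$ gives $c_0b_1+c_1b_0^2=0$. Therefore $c_1=c_0b_1/b_0^2=b_1/b_0^3$, and by \eqref{eq:Scoef} applied to $U$,
\[
S(U)=\frac{c_1}{c_0}=\frac{b_1/b_0^3}{1/b_0}=\frac{b_1}{b_0^2}.
\]
I should check the degenerate dimensions. When $d=n$ we have $U=0$, and when $d=0$ the polynomial is $L_H=X$. The identity is meant for $1\le d\le n-1$, where the $X^2$ coefficient comparison is valid since $2<q$. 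If $\dim U=1$, then $L_U=X^2+c_0X$ and the formula $S=c_1/c_0$ still holds with $c_1=1$.

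For the bijection, I will exhibit the inverse map. The dual factorization $X^q+X=L_H(L_U(X))$ holds because linearized polynomials dividing $X^q+X$ commute in this way; equivalently, one applies the adjoint with respect to the trace form. This gives $H=\im L_U$. So $U\mapsto \im L_U$ inverts $H\mapsto\im L_H$. The step needing most care is proving $L_H\circ L_U=X^q+X$. I would argue it as follows. $L_H\circ L_U$ is monic of degree $q$. Its right composition factor $L_U$ shows that it vanishes on $\ker L_U=U$. More cleanly, right division of $X^q+X$ by $L_H$ in the (noncommutative) ring of linearized polynomials gives $X^q+X=M\circ L_H$ with $M=L_U$. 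Since $X^q+X$ is central in that ring modulo nothing (it commutes with every $\F_q$-linearized polynomial), we get $L_H\circ(M\circ L_H)=(M\circ L_H)\circ L_H=M\circ L_H\circ L_H$. Cancelling $L_H$ on the right, which is allowed because the ring has no zero divisors, gives $L_H\circ M=M\circ L_H=X^q+X$. Hence $\im L_U=H$, and the map is a bijection.

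Finally, $b_0\ne0$, so $S(U)=0$ if and only if $b_1=0$, if and only if $S(H)=0$. Combined with the bijection, this gives $N_{n,d}=N_{n,n-d}$, which is \eqref{eq:complement}.
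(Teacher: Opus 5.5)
Your proof is correct. The computation of \eqref{eq:SU} is the same as the paper's: compare the coefficients of $X$ and $X^2$ in $L_U\circ L_H=X^q+X$. The other two parts take different routes.

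For \eqref{eq:Scoef}, the paper reads off the coefficient of $X$ in $L_H(X)/X=\prod_{h\ne0}(X+h)$, which is $b_0S(H)$ by elementary symmetric functions. Your logarithmic-derivative route via $b_0/L_H(X)=\sum_{h\in H}(X+h)^{-1}$ is equivalent. It matches the coset identity in the introduction but is slightly longer.

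The real difference is the bijection. The paper proves only injectivity: if $\im L_{H_1}=\im L_{H_2}=U$, then $L_U\circ(L_{H_1}-L_{H_2})=0$, and the degree of a composition forces $L_{H_1}=L_{H_2}$. Surjectivity then follows from the count $\gauss nd=\gauss n{n-d}$.

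You instead prove $L_H\circ L_U=L_U\circ L_H$. This uses that $X^q+X$ commutes with every linearized polynomial with coefficients in $\F_q$, which holds because $L(X)^q=L(X^q)$ for such $L$, followed by right cancellation of $L_H$. That step is the same degree argument, applied on the other side. Your approach gives the inverse map explicitly as $U\mapsto\im L_U$, that is, $\im L_{\im L_H}=H$. Since your argument works in every dimension $1\le d\le n-1$, both compositions are the identity and no counting is needed. The cost is a little skew-polynomial algebra in place of a cardinality comparison.

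Three small points to tidy up:
\begin{itemize}
\item From $L_H\circ L_U=X^q+X$ you only get $\im L_U\subseteq H$. Equality needs the dimension count $\dim\im L_U=n-(n-d)=d$.
\item The sentence saying that $L_H\circ L_U$ vanishes on $U$ proves nothing and should be dropped in favour of the centrality argument.
\item The degenerate case $d=n$ ($H=\F_q$, $U=\{0\}$) should be dispatched explicitly for the bijection and for \eqref{eq:complement}, as the paper does, even though it is trivial.
\end{itemize}
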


\begin{proof}
Note that the coefficient of $X$ in
$L_H(X)/X=\prod_{0\ne h\in H}(X+h)$ is $b_0S(H)$. By comparison with
\eqref{eq:LH}, one obtains \eqref{eq:Scoef}.

If $d=n$, then $H=\F_q$ and $U=\{0\}$. Since $n\ge2$, both $S(H)$ and
$S(U)$ are zero, and all assertions are immediate. We therefore assume
$1\le d<n$.

Let $U=\im L_H$. The composition $L_U\circ L_H$ is monic, has degree
$2^n$, and vanishes at every element of $\F_q$. Hence
\[
 L_U\circ L_H=X^q+X.
\]
Write $L_U(Y)=c_0Y+c_1Y^2+\cdots$. Comparison of the coefficients of $X$
and $X^2$ yields
\[
 c_0b_0=1,\qquad c_0b_1+c_1b_0^2=0.
\]
By \eqref{eq:Scoef}, applied to $U$, these two identities imply
\eqref{eq:SU}.

It remains to justify the bijectivity of the image map. Suppose that
$H_1$ and $H_2$ have the same image $U$, and put
$P=L_{H_1}-L_{H_2}$. Since $L_U$ is linearized,
\[
 L_U\circ P=L_U\circ L_{H_1}-L_U\circ L_{H_2}=0.
\]
If $P$ were nonzero, then $P(0)=0$ would imply that $P$ is nonconstant.
Since $L_U$ is also nonconstant, the degree formula for polynomial composition
would give
\[
 \deg(L_U\circ P)=\deg(L_U)\deg(P)\ge1,
\]
a contradiction. Hence $L_{H_1}=L_{H_2}$ and therefore $H_1=H_2$.
The image map is injective. Since the sets of $d$-dimensional and
$(n-d)$-dimensional subspaces of $\F_q$ have the same cardinality, it is
bijective. Finally, \eqref{eq:complement} follows from \eqref{eq:Scoef} and
\eqref{eq:SU}.
\end{proof}

We shall also use the coefficient which is dual to $S(H)$. Let
$\langle x,y\rangle=\Tr(xy)$ be the absolute trace pairing.

\begin{lem}\label{lem:tau}
The number of $d$-dimensional subspaces $H$ of $\F_q$ for which $\tau(H)=0$ is
$N_{n,d}$.
\end{lem}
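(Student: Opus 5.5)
The plan is to realize the count of subspaces with $\tau=0$ as the count of subspaces with $S=0$ through a dimension-preserving bijection built from the trace-adjoint of $L_H$. Given a $d$-dimensional subspace $H$ with $L_H=\sum_{i=0}^d b_iX^{2^i}$ as in \eqref{eq:LH}, put $U=\im L_H$ and let $W=U^{\perp}$ be its orthogonal complement with respect to $\langle x,y\rangle=\Tr(xy)$. By Lemma~\ref{lem:basic}, $\dim U=n-d$, so $\dim W=d$. The map $H\mapsto W$ is a bijection on the set of $d$-dimensional subspaces, since it is the composition of the bijection $H\mapsto\im L_H$ from Lemma~\ref{lem:basic} with $U\mapsto U^\perp$. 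The aim is to show that $\tau(W)=0$ if and only if $S(H)=0$; counting over $H$ then gives exactly $N_{n,d}$.

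The first step is to identify $L_W$. Define the adjoint
\[
L_H^*(Y)=\sum_{i=0}^d (b_iY)^{2^{n-i}},
\]
viewed as a map on $\F_q$. Using $\Tr(z^{2})=\Tr(z)$, one checks that $\Tr\bigl(yL_H(x)\bigr)=\Tr\bigl(xL_H^*(y)\bigr)$ for all $x,y\in\F_q$. Because the trace pairing is nondegenerate, the kernel of $L_H^*$ on $\F_q$ is exactly $U^\perp=W$. Raising to the power $2^d$, which is a bijection of $\F_q$, yields the polynomial
\[
M(Y)=\sum_{i=0}^d b_i^{2^{d-i}}Y^{2^{d-i}},
\]
which agrees with $(L_H^*)^{2^d}$ as a function on $\F_q$. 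Hence $M$ vanishes exactly on $W$. Now $M$ is linearized of degree $2^d$ with leading coefficient $b_0^{2^d}\neq0$, coming from the term $i=0$, and it has the $2^d$ distinct roots $W$. Therefore $M=b_0^{2^d}L_W$.

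The second step reads off the next-to-leading coefficient. The term $Y^{2^{d-1}}$ comes from $i=1$, so
\[
\tau(W)=\frac{b_1^{2^{d-1}}}{b_0^{2^d}}.
\]
This vanishes if and only if $b_1=0$, and by \eqref{eq:Scoef} that is equivalent to $S(H)=0$. Summing over the bijection $H\mapsto W$ proves the lemma.

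The step that needs the most care is the identification $M=b_0^{2^d}L_W$. One must check that the adjoint identity holds with the exponents reduced modulo $X^q+X$, and that a degree-$2^d$ polynomial vanishing on a set of $2^d$ points is a scalar multiple of the subspace polynomial of that set. The degenerate case $d=n$ should be handled separately, as in the proof of Lemma~\ref{lem:basic}: there $L_H=X^q+X$, so $\tau(H)=0$ and $N_{n,n}=1$ for $n\ge2$.
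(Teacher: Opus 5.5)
Your proof is correct and follows essentially the same route as the paper: the same bijection $H\mapsto(\im L_H)^\perp$, the same adjoint-plus-Frobenius computation identifying its subspace polynomial as $b_0^{-2^d}\sum_{i} b_i^{2^{d-i}}X^{2^{d-i}}$, and the same separate treatment of $d=n$. The only difference is which coefficient you read off. You use the next-to-leading coefficient to get $\tau(W)=b_1^{2^{d-1}}/b_0^{2^d}$, hence $\tau(W)=0\iff S(H)=0$. The paper instead uses the coefficients of $X$ and $X^2$ to get $S(A)=\tau(H)^2$. Either identity transfers the count through the bijection equally well.
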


\begin{proof}
If $d=n$, then $H=\F_q$, and for $n\ge2$ one has
$\tau(H)=S(H)=0$. Thus the assertion is immediate. We henceforth assume
$1\le d<n$.

Let $A=(\im L_H)^\perp$, where the orthogonal complement is taken with
respect to the trace pairing $\langle x,y\rangle$. The adjoint of $L_H$
relative to this pairing is the linearized map
\[
 L_H^*(X)=\sum_{i=0}^d b_i^{2^{n-i}}X^{2^{n-i}}
 \pmod{X^q-X}.
\]
Hence $\ker L_H^*=A$. For $x\in\F_q$, the condition $L_H^*(x)=0$ is
unchanged after raising to the $2^d$-th power. Using $x^{2^n}=x$ in
$\F_q$, we may reduce the resulting Frobenius exponents modulo $n$. This
gives, on $\F_q$, the same zero set as the linearized polynomial
\[
 \sum_{i=0}^d b_i^{2^{d-i}}X^{2^{d-i}}.
\]
Its term of highest degree is $b_0^{2^d}X^{2^d}$. Since $b_0\ne0$, division
by $b_0^{2^d}$ therefore produces a monic linearized polynomial of degree
$2^d$ whose zero set is $A$. As $\dim A=d$, this polynomial must be the
subspace polynomial of $A$. Thus
\begin{equation}\label{eq:adjoint}
 L_A(X)=b_0^{-2^d}\sum_{i=0}^d b_i^{2^{d-i}}X^{2^{d-i}}.
\end{equation}

By Lemma~\ref{lem:basic}, $H\mapsto\im L_H$ is a bijection from the
$d$-dimensional subspaces onto the $(n-d)$-dimensional subspaces. Since the
trace pairing is nondegenerate, orthogonal complementation is a bijection from
the latter family back to the $d$-dimensional subspaces. Hence
$H\mapsto A=(\im L_H)^\perp$ is a bijection on the set of $d$-dimensional
subspaces. From the coefficients of $X$ and $X^2$ in \eqref{eq:adjoint}, and
then Lemma~\ref{lem:basic}, one obtains
\[
 S(A)=\tau(H)^2.
\]
Thus $\tau(H)=0$ if and only if $S(A)=0$. The desired equality of the two
counts follows. This is the coefficient duality used in
\cite{Carlet,HouZhaoFFA}.
\end{proof}

For $a\in\F_q$, let $\delta_a$ denote the delta function at $a$, that is,
\[
 \delta_a(x)=
 \begin{cases}
 1,& x=a,\\
 0,& x\ne a.
 \end{cases}
\]
For a condition $\mathcal P$, let $\one_{\{\mathcal P\}}$ denote its indicator
function.

For $1\le d\le n$ and $b\in\F_q$, define
\[
 h_d(b)=\#\{\text{subspaces }H\text{ of }\F_q:\dim H=d,\ 1\in H,\ \tau(H)=b\},
 \qquad
 B_d=\gauss{n-1}{d-1},
\]
and put
\[
 p_d(b)=\frac{h_d(b)}{B_d}.
\]
The function $p_d$ is a probability distribution on $\F_q$. More
precisely, if $H$ is chosen uniformly from the $d$-dimensional subspaces that
contain $1$, then
\[
 p_d(b)=\mathbb P(\tau(H)=b),\qquad b\in\F_q.
\]
For $d=1$, the only such subspace is $\F_2=\{0,1\}$, and its subspace
polynomial is $X^2+X$. Thus $p_1=\delta_1$.

\begin{lem}\label{lem:pzero}
For every $1\le d\le n$, we have
\begin{equation}\label{eq:pzero}
 p_d(0)=\frac{N_{n,d}}{G_{n,d}}.
\end{equation}
Moreover, if
\[
 M_d(b)=\#\{\text{subspaces }H\text{ of }\F_q:\dim H=d,\ \tau(H)=b\}
 \quad\text{and}\quad s=2^d,
\]
then
\begin{equation}\label{eq:M}
 M_d(0)=\frac{q-1}{s-1}h_d(0),
 \quad
M_d(b)=\frac{B_d-h_d(0)}{s-1}\quad(b\ne0).
\end{equation}
\end{lem}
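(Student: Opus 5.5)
The plan is to use the multiplicative action of $\F_q^*$ on subspaces, $H\mapsto cH$, together with a double count of pairs (subspace, nonzero element).

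The first step is to compute how $\tau$ transforms under scaling. For $c\in\F_q^*$ and a $d$-dimensional subspace $H$, with $s=2^d$ we have
\[
 L_{cH}(X)=\prod_{h\in H}(X+ch)=c^{s}L_H(X/c)=\sum_{i=0}^d b_i c^{s-2^i}X^{2^i}.
\]
Hence $\tau(cH)=c^{s/2}\tau(H)$. Since $c\mapsto c^{s/2}$ is a power of the Frobenius map, it is a bijection of $\F_q^*$. Consequently the condition $\tau=0$ is invariant under scaling. Moreover, for any $b,b'\in\F_q^*$ there is a unique $c$ with $c^{s/2}=b/b'$. Multiplication by this $c$ is then a bijection from $\{H:\dim H=d,\ \tau(H)=b'\}$ onto $\{H:\dim H=d,\ \tau(H)=b\}$. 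So $M_d(b)$ does not depend on $b\ne0$.

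Next I double count the pairs $(H,x)$ with $\dim H=d$, $\tau(H)=0$ and $0\ne x\in H$. Each such $H$ contributes $s-1$ pairs. For a fixed $x\ne0$, the map $H\mapsto x^{-1}H$ is a bijection from the $d$-spaces containing $x$ with $\tau=0$ onto the $d$-spaces containing $1$ with $\tau=0$, because $\tau=0$ is scaling invariant. So each $x$ contributes $h_d(0)$ pairs. This gives $(s-1)M_d(0)=(q-1)h_d(0)$, which is the first formula in \eqref{eq:M}.

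Applying the same double count without the condition on $\tau$ gives the standard identity $(s-1)G_{n,d}=(q-1)B_d$. By Lemma~\ref{lem:tau}, $M_d(0)=N_{n,d}$. Dividing the two identities therefore yields
\[
 \frac{N_{n,d}}{G_{n,d}}=\frac{M_d(0)}{G_{n,d}}=\frac{h_d(0)}{B_d}=p_d(0),
\]
which is \eqref{eq:pzero}.

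Finally, since $M_d(b)$ is constant for $b\ne0$, we have $(q-1)M_d(b)=G_{n,d}-M_d(0)$. Substituting $G_{n,d}=(q-1)B_d/(s-1)$ and $M_d(0)=(q-1)h_d(0)/(s-1)$ gives $M_d(b)=(B_d-h_d(0))/(s-1)$. The case $d=n$ needs no separate treatment, since all the identities above remain valid there.

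The only point requiring care is the scaling law $\tau(cH)=c^{s/2}\tau(H)$ and the bijectivity of $c\mapsto c^{s/2}$; everything else is routine counting.
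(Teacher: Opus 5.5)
Your proof is correct and follows the paper's argument essentially step for step. You use the scaling law $\tau(cH)=c^{2^{d-1}}\tau(H)$, then the double count of pairs $(H,v)$ with $0\ne v\in H$ to get $(s-1)M_d(0)=(q-1)h_d(0)$, then the incidence identity $(s-1)G_{n,d}=(q-1)B_d$ combined with Lemma~\ref{lem:tau}, and finally the complementary count for $b\ne0$; the only small difference is that you derive the incidence identity by the same double count instead of quoting it as standard, which is fine.
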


\begin{proof}
Note that for $c\ne0$,
\[
 L_{cH}(X)=c^{2^d}L_H(X/c),
 \qquad
 \tau(cH)=c^{2^{d-1}}\tau(H).
\]
Since $c\mapsto c^{2^{d-1}}$ permutes $\F_q^*$, the number of subspaces
with $\tau(H)=b$ is the same for every $b\ne0$. Scalar multiplication also
preserves the condition $\tau(H)=0$. We now count pairs $(H,v)$ with
$\tau(H)=0$ and $0\ne v\in H$. For each such $H$ there are $s-1$ choices
of $v$, whereas for each fixed $v\ne0$ there are $h_d(0)$ such subspaces
after scaling by $v^{-1}$. Hence
\[
 (s-1)M_d(0)=(q-1)h_d(0).
\]
This proves the first formula in \eqref{eq:M}. By Lemma~\ref{lem:tau},
$M_d(0)=N_{n,d}$, and the standard incidence count gives
\begin{equation}\label{eq:B_d}
 B_d=\frac{s-1}{q-1}G_{n,d}.
\end{equation}
Combining these two identities gives \eqref{eq:pzero}.

Finally, let $m$ denote the common value of $M_d(b)$ for $b\ne0$. Since
\[
 G_{n,d}-M_d(0)=(q-1)m,
\]
the first formula in \eqref{eq:M}, together with \eqref{eq:B_d}, implies
\[
 m=\frac{B_d-h_d(0)}{s-1}.
\]
This proves the second formula in \eqref{eq:M}.
\end{proof}

\section{A closed recursion for the coefficient distribution}\label{sec:transfer}

Let
\[
 \psi(x)=(-1)^{\Tr(x)},
 \qquad
 T=\ker\Tr.
\]
Consider the $\F_2$-linear map
\[
 \phi(x)=x^2+x.
\]
Its kernel is $\F_2$ and its image is $T$. If $H$ is a
$(d+1)$-dimensional subspace containing $1$, the kernel of the restriction
$\phi|_H$ is therefore $\F_2$, so $\phi(H)$ is a $d$-dimensional subspace of
$T$. Conversely, for every $d$-dimensional subspace $V$ of $T$, the inverse
image $\phi^{-1}(V)$ is a $(d+1)$-dimensional subspace containing $1$.
Thus $H\mapsto\phi(H)$ is a bijection between these two families.

If $H$ contains $1$ and $V=\phi(H)$, then
\[
 L_H=L_V\circ\phi.
\]
Indeed, the right-hand side is monic of degree $2^{d+1}$ and its zeros are
exactly the elements of $H$. It must therefore be $L_H$. By comparing the
two highest terms, one obtains
\begin{equation}\label{eq:tau-phi}
 \tau(H)=1+\tau(V).
\end{equation}

The next theorem is the main algebraic ingredient of the proof.

\begin{thm}\label{thm:transfer}
Let $1\le d<n$ and put $s=2^d$. Then
\begin{align}
 p_{d+1}(1)&=p_d(0),\label{eq:transfer-one}\\
 p_{d+1}(b)&=
 \frac{1-p_d(0)+(s-1)\displaystyle\sum_{v\in\F_q^*}
       \psi((b+1)/v)p_d(v)}{q-s},
 \qquad b\ne1.\label{eq:transfer-other}
\end{align}
Thus the coefficient distribution among subspaces containing $1$ is determined
recursively from $p_1=\delta_1$.
\end{thm}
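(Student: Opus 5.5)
The plan is to combine the bijection $H\mapsto V=\phi(H)$ from the start of this section with a character-sum count of subspaces lying inside $T$. By that bijection and \eqref{eq:tau-phi}, for every $b\in\F_q$,
\[
h_{d+1}(b)=\#\{V\subseteq T:\ \dim V=d,\ \tau(V)=b+1\},
\]
and the denominator is $B_{d+1}=\gauss{n-1}{d}$, which is also the number of $d$-dimensional subspaces of $T$. So it suffices to count $d$-dimensional subspaces $V\subseteq T$ with $\tau(V)=c$, where $c=b+1$.

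To detect the condition $V\subseteq T$, I would use the orthogonality relation $\sum_{v\in V}\psi(v)=s\,\one_{\{V\subseteq T\}}$. Summing over all $d$-dimensional $V$ with $\tau(V)=c$ gives
\[
\#\{V\subseteq T:\tau(V)=c\}=\frac1s\Bigl(M_d(c)+\sum_{v\in\F_q^*}\psi(v)\,\#\{V\ni v:\ \tau(V)=c\}\Bigr).
\]
For the inner count I would scale, exactly as in the proof of Lemma~\ref{lem:pzero}. The subspaces containing $v$ are the sets $vH$ with $1\in H$, and $\tau(vH)=v^{s/2}\tau(H)$. Hence $\#\{V\ni v:\tau(V)=c\}=h_d(c\,v^{-s/2})$.

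\emph{Case $c=0$.} Here every inner count equals $h_d(0)$, and $\sum_{v\ne0}\psi(v)=-1$. Using \eqref{eq:M}, the count becomes
\[
\frac1s\Bigl(\frac{q-1}{s-1}-1\Bigr)h_d(0)=\frac{q-s}{s(s-1)}\,h_d(0).
\]
The Gaussian coefficients satisfy $B_{d+1}/B_d=(2^{n-d}-1)/(s-1)=(q-s)/(s(s-1))$. Dividing by $B_{d+1}$ therefore gives \eqref{eq:transfer-one}.

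\emph{Case $c\neq0$.} I would substitute $u=c\,v^{-s/2}$, which is a bijection of $\F_q^*$ because $v\mapsto v^{s/2}$ is. Since the trace is Frobenius-invariant, $\psi(v)=\psi(v^{s/2})=\psi(c/u)$. The sum thus becomes $\sum_{u\ne0}\psi(c/u)\,h_d(u)$. Combining this with $M_d(c)=(B_d-h_d(0))/(s-1)$ from \eqref{eq:M}, dividing by $B_{d+1}=B_d(q-s)/(s(s-1))$, and normalizing by $B_d$ yields \eqref{eq:transfer-other} with $c=b+1$.

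The recursive determination from $p_1=\delta_1$ then follows by induction on $d$. The only delicate points are the Frobenius-invariance step $\psi(v)=\psi(v^{s/2})$ and the Gaussian-coefficient ratio, both of which are routine to verify.
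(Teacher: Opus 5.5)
Your proposal is correct and follows the paper's proof essentially step by step. Both arguments use the bijection with $d$-dimensional subspaces of $T$, detect $V\subseteq T$ through $\sum_{v\in V}\psi(v)$, and rescale to get $h_d(c\,v^{-2^{d-1}})$; both then split into the cases $c=0$ and $c\neq0$ using the two formulas of Lemma~\ref{lem:pzero} and the ratio $B_{d+1}/B_d=(q-s)/(s(s-1))$. The only cosmetic difference is that you combine the paper's two substitutions ($w=v^{2^{d-1}}$, then $t=x/w$) into the single substitution $u=c\,v^{-s/2}$.
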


It is important here to retain the entire distribution. For an individual
subspace, the value of $\tau(H)$ alone does not determine how many extensions
have a prescribed next coefficient. After averaging over all subspaces,
however, the passage from dimension $d$ to $d+1$ depends only on $p_d$ itself.

\begin{proof}
First, for a $d$-dimensional subspace $V$, additive character orthogonality on
$V$ gives
\[
 \one_{\{V\subseteq T\}}=\frac1s\sum_{v\in V}\psi(v).
\]
Indeed, the restriction of $\psi$ to $V$ is trivial exactly when
$V\subseteq T$; otherwise its sum over $V$ is zero.

Now fix $b\in\F_q$ and put $x=b+1$. By the bijection $H\mapsto\phi(H)$ and
\eqref{eq:tau-phi}, the number $h_{d+1}(b)$ is the number of $d$-dimensional
subspaces $V$ satisfying
\[
 V\subseteq T,\qquad \tau(V)=x.
\]
Therefore
\begin{align*}
s h_{d+1}(b)
=\sum_{\substack{\dim V=d\\ \tau(V)=x}}
   \sum_{v\in V}\psi(v)=M_d(x)+ \sum_{v\in\F_q^*}\psi(v)
   \#\{V:\dim V=d,\ \tau(V)=x,\ v\in V\}.
\end{align*}
Fix $v\ne0$. Multiplication by $v^{-1}$ gives a bijection onto the
$d$-dimensional subspaces $W$ containing $1$ and satisfying
\[
\tau(W)=v^{-2^{d-1}}x.
\]
Hence the number of such spaces is
$h_d(x/v^{2^{d-1}})$. Therefore
\begin{equation}\label{eq:raw}
 s h_{d+1}(b)
 =M_d(x)+\sum_{v\in \F_q^*}\psi(v)
 h_d\left(\frac{x}{v^{2^{d-1}}}\right).
\end{equation}
The Frobenius map $v\mapsto v^{2^{d-1}}$ permutes $\F_q^*$, and
$\Tr(v^{2^{d-1}})=\Tr(v)$. After this change of variables, the sum in
\eqref{eq:raw} becomes
\[
 \sum_{w\in\F_q^*}\psi(w)h_d(x/w).
\]
We now distinguish two cases. If $x=0$, then
\[
 \sum_{w\in \F_q^*}\psi(w)h_d(0)=-h_d(0),
\]
and hence, by the first formula in \eqref{eq:M},
\[
 s h_{d+1}(1)
 =M_d(0)-h_d(0)
 =\frac{q-s}{s-1}h_d(0).
\]
Since
\begin{equation}\label{eq:B-ratio}
 \frac{B_{d+1}}{B_d}=\frac{q-s}{s(s-1)},
\end{equation}
division by $sB_{d+1}$ gives
\[
 p_{d+1}(1)=p_d(0),
\]
which is \eqref{eq:transfer-one}.

Suppose next that $x\ne0$. By the substitution $t=x/w$, one has
\[
\sum_{w\ne0}\psi(w)h_d(x/w)
=\sum_{t\ne0}\psi(x/t)h_d(t).
\]
For $x\ne0$, the second formula in \eqref{eq:M} gives
\[
 M_d(x)=\frac{B_d-h_d(0)}{s-1}.
\]
Substituting these two identities into \eqref{eq:raw}, using
$h_d(t)=B_dp_d(t)$, and then dividing by $B_{d+1}$ with
\eqref{eq:B-ratio}, we obtain
\[
 p_{d+1}(b)
 =\frac{1-p_d(0)+(s-1)\displaystyle\sum_{t\ne0}
 \psi(x/t)p_d(t)}{q-s}.
\]
Finally, take $x=b+1$, and this is exactly \eqref{eq:transfer-other}.
\end{proof}

For a real-valued function $f$ on $\F_q$, define
\begin{align}
 (\mathcal T_sf)(1)&=f(0),\label{eq:Ts-one}\\
 (\mathcal T_sf)(b)&=
 \frac1{q-s}\sum_{v\in\F_q^*}
 \bigl(1+(s-1)\psi((b+1)/v)\bigr)f(v),
 \qquad b\ne1.\label{eq:Ts-other}
\end{align}
The formulas are linear in $f$, so $\mathcal T_s$ is a linear operator on
the space of real-valued functions on $\F_q$. With this notation,
Theorem~\ref{thm:transfer} can be written as
\begin{equation}\label{eq:p-transfer}
 p_{d+1}=\mathcal T_{2^d}p_d.
\end{equation}

The operator $\mathcal T_s$ preserves the sum of the coordinates. Indeed,
for every $v\in\F_q^*$,
\[
 \sum_{1\ne b\in \F_q}\psi((b+1)/v)=-1,
\]
and hence a direct summation in
\eqref{eq:Ts-one}--\eqref{eq:Ts-other} gives
\[
 \sum_{b\in\F_q}(\mathcal T_sf)(b)
 =\sum_{b\in\F_q}f(b).
\]
Moreover, $\mathcal T_s$ fixes the uniform distribution
$u=q^{-1}\one$, that is, $\mathcal T_su=u$.
Indeed, $(\mathcal T_su)(1)=1/q$, while for $b\ne1$,
\[
 (\mathcal T_su)(b)
 =\frac{(q-1)-(s-1)}{q(q-s)}
 =\frac1q.
\]
Although some matrix entries of $\mathcal T_s$ may be negative, this is
irrelevant for what follows, since we use $\mathcal T_s$ only as a linear
operator.

\section{Fourier analysis of the transfer operator}\label{sec:fourier}

We now study the linear operator introduced at the end of the preceding
section. Recall that, for $1\le d<n$, we put $s=2^d$ and have
$p_{d+1}=\mathcal T_s p_d$,
where $\mathcal T_s$ is the operator defined by
\eqref{eq:Ts-one}--\eqref{eq:Ts-other}. We also recall that
$u=q^{-1}\one$ is the uniform distribution on $\F_q$ and that
$\mathcal T_su=u$. Hence
\[
 p_{d+1}-u=\mathcal T_s(p_d-u).
\]
Since
\[
 \sum_{x\in\F_q}(p_d(x)-u(x))=0,
\]
it is natural to study $\mathcal T_s$ on the subspace of functions with zero
coordinate sum.

Let $\mathbb R^{\F_q}$ denote the real vector space of all functions
$f:\F_q\to\mathbb R$, and put
\[
 \mathcal H=
 \left\{f\in\mathbb R^{\F_q}:\sum_{x\in\F_q}f(x)=0\right\}.
\]
Since $\mathcal T_s$ preserves the sum of the coordinates, $\mathcal H$ is
invariant under $\mathcal T_s$; thus $\mathcal T_s$ restricts to a linear
operator $\mathcal H\to\mathcal H$.

We equip $\mathbb R^{\F_q}$ with the standard Euclidean inner product and
norm
\[
 \langle f,g\rangle=\sum_{x\in\F_q}f(x)g(x),
 \qquad
 \|f\|_2=\left(\sum_{x\in\F_q}|f(x)|^2\right)^{1/2}.
\]
For $f\in\mathbb R^{\F_q}$, we write
\[
 f^\perp=\{g\in\mathbb R^{\F_q}:\langle f,g\rangle=0\}
\]
for its orthogonal complement, and for a linear operator
$A:\mathcal H\to\mathcal H$, we write
\[
 \|A\|_{\mathcal H\to\mathcal H}
 =\sup_{0\ne f\in\mathcal H}\frac{\|Af\|_2}{\|f\|_2}
\]
for the operator norm induced by the Euclidean norm.

For $a\in\F_q$, recall that $\delta_a$ denotes the delta function at $a$, and
define
\[
 w_a=\frac{\delta_a-u}{\sqrt{1-1/q}}.
\]
Since $\delta_a-u\in\mathcal H$ and
\[
 \|\delta_a-u\|_2^2=1-\frac1q,
\]
each $w_a$ is a unit vector in $\mathcal H$.

Finally, for later use, define
\[
 \alpha_s=\frac{\sqrt q\,(s-1)}{q-s}.
\]

\begin{lem}\label{lem:norm}
Let $1\le d<n$ and put $s=2^d$. For every $f\in\mathcal H$, we have
\begin{equation}\label{eq:norm}
 \|\mathcal T_sf\|_2^2
 =\alpha_s^2\|f\|_2^2
 +(1-\alpha_s^2)|\langle f,w_0\rangle|^2.
\end{equation}
Moreover,
\begin{equation}\label{eq:w0w1map}
 \mathcal T_sw_0=w_1.
\end{equation}
If $f\in w_0^\perp\cap\mathcal H$, then
\[
 \mathcal T_sf\in w_1^\perp\cap\mathcal H
 \qquad\text{and}\qquad
 \|\mathcal T_sf\|_2=\alpha_s\|f\|_2.
\]
More generally, for any $f,g\in w_0^\perp\cap\mathcal H$,
\[
 \langle\mathcal T_sf,\mathcal T_sg\rangle
 =\alpha_s^2\langle f,g\rangle.
\]
Consequently, $\alpha_s^{-1}\mathcal T_s$ maps
$w_0^\perp\cap\mathcal H$ isometrically onto
$w_1^\perp\cap\mathcal H$. In particular, if $s^2\le q$, then
\[
 \|\mathcal T_s\|_{\mathcal H\to\mathcal H}\le1.
\]
\end{lem}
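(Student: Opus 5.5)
The plan is to compute $\|\mathcal T_sf\|_2^2$ directly by rewriting the sum in \eqref{eq:Ts-other} as a Fourier transform and applying Parseval. All remaining assertions then follow from \eqref{eq:norm} together with a few easy observations.

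\emph{Fourier rewriting.} Fix $f\in\mathcal H$ and define $F(y)=f(1/y)$ for $y\ne0$, $F(0)=0$. Put $\hat F(x)=\sum_{y}\psi(xy)F(y)$. The substitution $y=1/v$ gives $\sum_{v\ne0}\psi(x/v)f(v)=\hat F(x)$. Since $f\in\mathcal H$, we have $\sum_{v\ne0}f(v)=-f(0)$. Hence, for $b\ne1$ and $x=b+1\ne0$,
\[
 (\mathcal T_sf)(b)=\frac{-f(0)+(s-1)\hat F(x)}{q-s},
\]
while $(\mathcal T_sf)(1)=f(0)$.

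\emph{Parseval.} Three facts are needed:
\begin{itemize}
\item Parseval: $\sum_{x}\hat F(x)^2=q\sum_{v\ne0}f(v)^2=q(\|f\|_2^2-f(0)^2)$.
\item $\hat F(0)=-f(0)$.
\item $\sum_x\hat F(x)=qF(0)=0$, so $\sum_{x\ne0}\hat F(x)=f(0)$.
\end{itemize}
Expanding the square and using these gives
\[
 \|\mathcal T_sf\|_2^2=f(0)^2+\frac{(q-1)f(0)^2-2(s-1)f(0)^2+(s-1)^2\bigl(q\|f\|_2^2-(q+1)f(0)^2\bigr)}{(q-s)^2}.
\]
For $f\in\mathcal H$ one has $\langle f,u\rangle=0$, so $\langle f,w_0\rangle^2=\frac{q}{q-1}f(0)^2$. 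It remains to check that the coefficient of $f(0)^2$ above equals $(1-\alpha_s^2)\frac{q}{q-1}$, and that the coefficient of $\|f\|_2^2$ is $q(s-1)^2/(q-s)^2=\alpha_s^2$. This is routine algebra using $(q-s)^2-(q-1)+2(s-1)+(s-1)^2(q+1)=\dots$, and it is the only place where care with constants is needed. I expect this bookkeeping to be the main (if mundane) obstacle, so I would double-check it with $s=1$ as a sanity case.

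\emph{Remaining claims.}
\begin{itemize}
\item \emph{$\mathcal T_sw_0=w_1$.} In \eqref{eq:Ts-other} the sum runs over $v\ne0$, so it vanishes for $\delta_0$; also $(\mathcal T_s\delta_0)(1)=1$. Thus $\mathcal T_s\delta_0=\delta_1$. Since $\mathcal T_su=u$, linearity gives \eqref{eq:w0w1map}.
\item \emph{Orthogonality to $w_1$.} For $g\in\mathcal H$, $\langle g,w_1\rangle$ is proportional to $g(1)$, and similarly $\langle f,w_0\rangle$ is proportional to $f(0)$. So if $f\in w_0^\perp\cap\mathcal H$, then $(\mathcal T_sf)(1)=f(0)=0$, and hence $\mathcal T_sf\in w_1^\perp\cap\mathcal H$.
\item \emph{Norm scaling.} On $w_0^\perp\cap\mathcal H$ the last term of \eqref{eq:norm} vanishes, so $\|\mathcal T_sf\|_2=\alpha_s\|f\|_2$.
\item \emph{Inner products.} Polarizing this quadratic identity on the subspace $w_0^\perp\cap\mathcal H$ gives $\langle\mathcal T_sf,\mathcal T_sg\rangle=\alpha_s^2\langle f,g\rangle$.
\item \emph{Onto.} Because $\alpha_s>0$, the map is injective. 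Both $w_0^\perp\cap\mathcal H$ and $w_1^\perp\cap\mathcal H$ have dimension $q-2$, so it is onto.
\end{itemize}

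\emph{Operator norm bound.} If $s^2\le q$, then $s\le\sqrt q$. Hence $s\sqrt q\le q$ and $s\le\sqrt q$, which give $\sqrt q(s-1)\le q-s$, that is, $\alpha_s\le1$. Then \eqref{eq:norm} together with $|\langle f,w_0\rangle|\le\|f\|_2$ (Cauchy--Schwarz, $w_0$ a unit vector) gives $\|\mathcal T_sf\|_2^2\le\|f\|_2^2$, so $\|\mathcal T_s\|_{\mathcal H\to\mathcal H}\le1$.
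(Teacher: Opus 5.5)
Your proposal is correct and follows essentially the same route as the paper. Your $\hat F$ is exactly the paper's $F(x)=\sum_{v\ne0}\psi(x/v)f(v)$, your facts $\sum_{x\ne0}\hat F(x)=f(0)$ and $\sum_{x\ne0}\hat F(x)^2=q(\|f\|_2^2-f(0)^2)-f(0)^2$ are the paper's \eqref{eq:Fmom}, and the remaining claims are derived the same way. One small slip: the bookkeeping identity you need is $(q-s)^2+(q-1)-2(s-1)-(s-1)^2(q+1)=q(q-s^2)$ (your parenthetical has the last three signs reversed), which gives the coefficient $q(q-s^2)/(q-s)^2=(1-\alpha_s^2)\frac{q}{q-1}$ of $f(0)^2$, as in the paper.
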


\begin{proof}
Set $a=f(0)$ and define
\[
 F(x)=\sum_{v\in\F_q^*}\psi(x/v)f(v).
\]
Since $f\in\mathcal H$, we have
\[
 \sum_{x\in\F_q}f(x)=0.
\]
By additive character orthogonality,
\begin{equation}\label{eq:Fmom}
 \sum_{x\ne0}F(x)=a,
 \qquad
 \sum_{x\ne0}F(x)^2
 =q\bigl(\|f\|_2^2-a^2\bigr)-a^2.
\end{equation}
Indeed, for the first identity,
\[
 \sum_{x\ne0}\psi(x/v)=-1
 \qquad (v\ne0),
\]
while the second follows by expanding the square and using
\[
 \sum_{x\in\F_q}\psi\bigl(x(v^{-1}+w^{-1})\bigr)
 =
 \begin{cases}
 q,&v=w,\\
 0,&v\ne w.
 \end{cases}
\]

For $b\ne1$, equations \eqref{eq:Ts-one} and \eqref{eq:Ts-other} give
\[
 (\mathcal T_sf)(b)
 =\frac{-a+(s-1)F(b+1)}{q-s},
\]
whereas
\[
 (\mathcal T_sf)(1)=a.
\]
Expanding the square and applying \eqref{eq:Fmom}, we obtain
\[
 \|\mathcal T_sf\|_2^2
 =
 \frac{q(s-1)^2}{(q-s)^2}\|f\|_2^2
 +\frac{q(q-s^2)}{(q-s)^2}a^2.
\]
Since
\[
 |\langle f,w_0\rangle|^2
 =\frac{q}{q-1}a^2
\]
and
\[
 1-\alpha_s^2
 =\frac{(q-1)(q-s^2)}{(q-s)^2},
\]
formula \eqref{eq:norm} follows.

Next, since $\mathcal T_su=u$ and $\mathcal T_s\delta_0=\delta_1$, we have
\[
 \mathcal T_s(\delta_0-u)=\delta_1-u,
\]
and hence
\[
 \mathcal T_sw_0=w_1.
\]
This proves \eqref{eq:w0w1map}.

Now let $f\in w_0^\perp\cap\mathcal H$. Then
\[
 \langle f,w_0\rangle=0.
\]
Since
\[
w_0=\frac{\delta_0-u}{\sqrt{1-1/q}}
\]
and $f\in\mathcal H$, it follows that $f(0)=0$. Therefore
\[
 (\mathcal T_sf)(1)=0.
\]
Moreover, $\mathcal T_s$ preserves the coordinate sum, so
$\mathcal T_sf\in\mathcal H$. Since
\[
w_1=\frac{\delta_1-u}{\sqrt{1-1/q}},
\]
we obtain
\[
 \langle \mathcal T_sf,w_1\rangle=0.
\]
Thus
\[
 \mathcal T_sf\in w_1^\perp\cap\mathcal H.
\]

For such an $f$, formula \eqref{eq:norm} reduces to
\[
\|\mathcal T_sf\|_2=\alpha_s\|f\|_2.
\]
More generally, if $f,g\in w_0^\perp\cap\mathcal H$, then applying the
polarization identity to \eqref{eq:norm} gives
\[
\langle\mathcal T_sf,\mathcal T_sg\rangle
=\alpha_s^2\langle f,g\rangle.
\]
Hence
\[
\alpha_s^{-1}\mathcal T_s:
w_0^\perp\cap\mathcal H
\longrightarrow
w_1^\perp\cap\mathcal H
\]
is an isometry onto its image. The domain and codomain have the same
dimension, namely $q-2$, and $\alpha_s>0$ for $1<s<q$. Therefore the map is
onto.

Finally, if $s^2\le q$, then
\[
\alpha_s=\frac{\sqrt q\,(s-1)}{q-s}\le1.
\]
Together with \eqref{eq:norm}, this yields
\[
\|\mathcal T_sf\|_2\le\|f\|_2
\qquad(f\in\mathcal H),
\]
and hence
\[
\|\mathcal T_s\|_{\mathcal H\to\mathcal H}\le1.
\]
Lemma~\ref{lem:norm} is proved.
\end{proof}

Lemma~\ref{lem:norm} shows that, on the orthogonal complement of $w_0$,
the operator $\mathcal T_s$ multiplies Euclidean norms by the factor
$\alpha_s$. Thus, when $\alpha_s<1$, every such direction is contracted,
whereas the exceptional direction $w_0$ is mapped isometrically to $w_1$
by \eqref{eq:w0w1map}. The two unit vectors satisfy
\[
 \langle w_0,w_1\rangle=-\frac1{q-1},
\]
so they are nearly orthogonal when $q$ is large. This observation is the key
to the two-step estimate below.

\begin{lem}\label{lem:two-exact}
Let $s=2^d$ with $d\ge1$, and assume $s^2\le q/4$. Put
\[
a=\alpha_s,\quad b=\alpha_{2s},\quad c=\frac1{q-1},\quad
\Sigma_s=a^2+b^2+(1-a^2)(1-b^2)c^2.
\]
Then
\begin{equation}\label{eq:rho}
\|\mathcal T_{2s}\mathcal T_s\|_{\mathcal H\to\mathcal H}
 =
 \left(
\frac{\Sigma_s+\sqrt{\Sigma_s^2-4a^2b^2}}{2}
\right)^{1/2}
 =:\rho_s.
\end{equation}
If, in addition, $s^2\le q/32$, then
\begin{equation}\label{eq:two-contract}
 \rho_s<\frac{2s}{\sqrt q}.
\end{equation}
\end{lem}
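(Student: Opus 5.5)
The plan is to reduce the operator norm of $\mathcal T_{2s}\mathcal T_s$ on $\mathcal H$ to the largest eigenvalue of an explicit $2\times2$ quadratic form, using only Lemma~\ref{lem:norm}. Let $f\in\mathcal H$ with $\|f\|_2=1$, and write $f=xw_0+g$ with $g\in w_0^\perp\cap\mathcal H$, so that $x^2+\|g\|_2^2=1$. Put $y=\|g\|_2$.

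\emph{Step 1: the intermediate vector.} By Lemma~\ref{lem:norm},
\[
h:=\mathcal T_sf=xw_1+\mathcal T_sg,\qquad \mathcal T_sg\in w_1^\perp\cap\mathcal H,\qquad \|\mathcal T_sg\|_2=ay.
\]
Hence $\|h\|_2^2=x^2+a^2y^2$. Moreover, $\alpha_s^{-1}\mathcal T_s$ maps $w_0^\perp\cap\mathcal H$ \emph{onto} $w_1^\perp\cap\mathcal H$. So for fixed $y$ the vector $\mathcal T_sg$ ranges over all vectors of norm $ay$ in $w_1^\perp\cap\mathcal H$.

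\emph{Step 2: the inner product with $w_0$.} The projection of $w_0$ onto $w_1^\perp\cap\mathcal H$ is $w_0+cw_1$, because $\langle w_0,w_1\rangle=-c$. Its norm is $\sqrt{1-c^2}$. Therefore
\[
\langle h,w_0\rangle=-cx+a\sqrt{1-c^2}\,y\,t,
\]
where $t\in[-1,1]$ can be prescribed freely. This uses $\dim\mathcal H=q-1\ge3$, so a vector of any prescribed angle exists.

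\emph{Step 3: the quadratic form.} Apply \eqref{eq:norm} for $\mathcal T_{2s}$, which is legitimate since $(2s)^2\le q$:
\[
\|\mathcal T_{2s}h\|_2^2=b^2(x^2+a^2y^2)+(1-b^2)\bigl(-cx+a\sqrt{1-c^2}\,yt\bigr)^2 .
\]
Since $1-b^2\ge0$, for the supremum we may take $t=\pm1$; the sign can be absorbed into $y$. This leaves the quadratic form in $(x,y)$ on the unit circle with matrix
\[
Q=\begin{pmatrix} b^2+(1-b^2)c^2 & -(1-b^2)ca\sqrt{1-c^2}\\ -(1-b^2)ca\sqrt{1-c^2} & a^2-a^2(1-b^2)c^2\end{pmatrix}.
\]
A short computation gives $\operatorname{tr}Q=\Sigma_s$ and $\det Q=a^2b^2$. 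Every unit vector $(x,y)$ is realized by some $f$. Hence $\|\mathcal T_{2s}\mathcal T_s\|^2$ equals the largest eigenvalue of $Q$, namely $\bigl(\Sigma_s+\sqrt{\Sigma_s^2-4a^2b^2}\bigr)/2$, which is \eqref{eq:rho}.

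\emph{Step 4: the contraction bound \eqref{eq:two-contract}.} This is the main obstacle, because the crude bound $\rho_s^2\le\Sigma_s\approx 5s^2/q$ is too weak. Instead, put $L=4s^2/q$. Since $\rho_s^2$ is the larger root of $\lambda^2-\Sigma_s\lambda+a^2b^2$, it suffices to show two things:
\[
L>\Sigma_s/2\qquad\text{and}\qquad P(L):=(L-a^2)(L-b^2)-(1-a^2)(1-b^2)c^2L>0 .
\]
Here $P(L)$ is the characteristic polynomial evaluated at $L$. Recall $a^2=q(s-1)^2/(q-s)^2$ and $b^2=q(2s-1)^2/(q-2s)^2$.

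1. The condition $b^2<L$ is equivalent to $1-\tfrac1{2s}<1-\tfrac{2s}{q}$, that is, $4s^2<q$.
2. Under $s^2\le q/32$ this is quantitative: $2s/q\le 1/(16s)$, so $L-b^2\gtrsim L\cdot\tfrac{7}{8s}\asymp s/q$.
3. Similarly $a^2\le L/4\cdot(1+o(1))$, so $L-a^2\ge \tfrac{L}{2}$ with room to spare. This also yields $\Sigma_s<2L$.
4. The subtracted term is at most $c^2L\approx L/q^2$. The main product is $\gtrsim L\cdot s/q$, which is far larger.

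I would carry out these inequalities carefully with explicit constants, bounding $(q-s)^{-2}$ and $(q-2s)^{-2}$ via $s/q\le 1/(32s)$. This gives $\rho_s^2<L$, i.e.\ $\rho_s<2s/\sqrt q$.
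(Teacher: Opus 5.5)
Your proof is correct. It reaches the same $2\times2$ problem as the paper by a more direct, variational route.

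\textbf{The norm formula.} The paper first derives $\mathcal T_s^*\mathcal T_s=D_a^2$ and builds the factorization $\mathcal T_s=R_sD_a$ with $R_s$ orthogonal, then reduces to $A=D_bR_sD_a$. It shows that $A$ maps $E=\operatorname{span}\{w_0,R_s^{-1}w_0\}$ onto $F=\operatorname{span}\{w_0,w_1\}$ and $E^\perp$ onto $F^\perp$. It computes the matrix of the first block,
\[
M=\begin{pmatrix}-c&ah\\ bh&abc\end{pmatrix},\qquad h=\sqrt{1-c^2},
\]
and must then check separately that $\lambda_+\ge ab$ beats the constant singular value $ab$ on $E^\perp$.

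Your argument observes that only the component of $\mathcal T_sg$ along $w_0+cw_1$ affects $\langle h,w_0\rangle$. Surjectivity of $\alpha_s^{-1}\mathcal T_s$ then lets that component take any value up to $a\|g\|_2\sqrt{1-c^2}$. This needs no adjoints and no block comparison, because the supremum over all $f$ is taken in one go. In fact your $Q$ is exactly $M^{\mathsf T}M$ in the coordinates $(x,y)$.

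\textbf{What the paper's route buys.} It gives the full singular-value picture: $ab$ with multiplicity $q-3$, plus the two singular values of $M$. More usefully, it gives a much shorter proof of the contraction bound. Splitting $M$ into its off-diagonal part (norm $bh\le b$, since $b>a$) and its diagonal part (norm $c$) gives $\rho_s\le b+c$. Then
\[
\frac{2s}{\sqrt q}-b=\frac{q-4s^2}{\sqrt q\,(q-2s)}>\frac{7}{8\sqrt q}>c .
\]
Since $Q=M^{\mathsf T}M$, you could import this bound verbatim.

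\textbf{Your Step 4.} Your characteristic-polynomial argument is nevertheless sound, and the constants are easy to pin down.
\begin{itemize}
\item $a^2<L/4$, because $(1-1/s)/(1-s/q)<1$ when $s^2<q$.
\item $1-b/\sqrt L=(1/(2s)-2s/q)/(1-2s/q)\ge 7/(16s)$, so $L-b^2\ge\frac{7}{16s}L$.
\item Hence $(L-a^2)(L-b^2)\ge\frac{21}{64s}L^2>c^2L$, because $\frac{21}{64s}L=\frac{21s}{16q}>(q-1)^{-2}$.
\item Finally $\Sigma_s<\frac{5}{4}L+c^2<2L$, which places $L$ to the right of the vertex.
\end{itemize}
Your stated factor $\frac{7}{8s}$ is only asymptotic: when $s=2$ one has $1+b/\sqrt L\approx\frac{7}{4}$ rather than $2$. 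The slack in the final inequality makes this harmless.
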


\begin{proof}
We first make the orthogonal factorization of $\mathcal T_s$ explicit.
Using the orthogonal decomposition
\[
 \mathcal H=\operatorname{span}\{w_0\}
 \oplus (w_0^\perp\cap\mathcal H),
\]
define a linear operator $D_a:\mathcal H\to\mathcal H$ by
\[
 D_aw_0=w_0,
 \qquad
 D_af=af\quad(f\in w_0^\perp\cap\mathcal H).
\]
Formula \eqref{eq:norm} says precisely that
\[
 \|\mathcal T_sf\|_2^2=\|D_af\|_2^2
 \qquad(f\in\mathcal H).
\]
Let $\mathcal T_s^*$ denote the adjoint of $\mathcal T_s$ with respect to
the Euclidean inner product on $\mathcal H$, so that
\[
 \langle\mathcal T_sf,g\rangle
 =\langle f,\mathcal T_s^*g\rangle
 \qquad(f,g\in\mathcal H).
\]
By the polarization identity, the equality of norms above implies
\[
 \langle\mathcal T_sf,\mathcal T_sg\rangle
 =\langle D_af,D_ag\rangle
 \qquad(f,g\in\mathcal H).
\]
From the definition of $D_a$, we see that $D_a$ is self-adjoint. Hence
\[
 \langle f,\mathcal T_s^*\mathcal T_sg\rangle
 =\langle f,D_a^2g\rangle
 \qquad(f,g\in\mathcal H),
\]
and therefore we have $\mathcal T_s^*\mathcal T_s=D_a^2$

Since $a>0$, the operator $D_a$ is invertible. Define
$R_s:=\mathcal T_sD_a^{-1}$.
Since $D_a^{-1}$ is also self-adjoint, we have
\[
 R_s^*R_s
=D_a^{-1}\mathcal T_s^*\mathcal T_sD_a^{-1}
=D_a^{-1}D_a^2D_a^{-1}
 =I.
\]
Thus $R_s$ is orthogonal on $\mathcal H$, and by definition
$\mathcal T_s=R_sD_a$.
Moreover, since $D_aw_0=w_0$ and
$\mathcal T_sw_0=w_1$ by \eqref{eq:w0w1map}, we have
$ R_sw_0=w_1$

Define $D_b$ in the same way, with $b=\alpha_{2s}$ in place of $a$, and put
$R_{2s}:=\mathcal T_{2s}D_b^{-1}$.
The same argument gives
$T_{2s}=R_{2s}D_b$, where $R_{2s}$ is orthogonal. Consequently,
\[
 \mathcal T_{2s}\mathcal T_s
 =R_{2s}D_bR_sD_a.
\]
Since left multiplication by an orthogonal operator does not change
singular values, $\mathcal T_{2s}\mathcal T_s$ and
$D_bR_sD_a$ have the same singular values. We therefore   study
$A:=D_bR_sD_a$.

Now we put $z=R_s^{-1}w_0$. Since $R_s$ is orthogonal and $R_sw_0=w_1$, we have
\[
 \langle w_0,z\rangle
 =\langle R_sw_0,w_0\rangle
 =\langle w_1,w_0\rangle
 =-c.
\]
Let $E=\operatorname{span}\{w_0,z\}
$, $F=\operatorname{span}\{w_0,w_1\}$. We write
\[
 \mathcal H=E\oplus E^\perp,
 \qquad
 \mathcal H=F\oplus F^\perp
\]
for the orthogonal decompositions. Since
$R_sw_0=w_1$, $R_sz=w_0$,
we have
$R_sE=F$.
As $R_s$ is orthogonal, it also follows that
$R_sE^\perp=F^\perp$.
Note that the operators $D_a$ and $D_b$ preserve the corresponding decompositions, that is,
$D_a(E)=E$, $D_a(E^\perp)=E^\perp$; and 
$D_b(F)=F$, $D_b(F^\perp)=F^\perp$.
It follows that
\[
A(E)=F,\qquad A(E^\perp)=F^\perp.
\]
Thus, with respect to orthonormal bases adapted to the two orthogonal
decompositions, the matrix of $A$ is block diagonal. Consequently, the singular values
of $A$ on $\mathcal H$ are precisely the singular values of the two
restrictions
$A:E\to F$ and 
$ A:E^\perp\to F^\perp$
taken together.

We first consider the restriction of $A$ to $E^\perp$. If $f\in E^\perp$, then
$f\perp w_0$, so $D_af=af$.
Moreover, $R_sf\in F^\perp\subseteq w_0^\perp$, and hence
\[ D_bR_sf=bR_sf.
\]
Therefore
\[
 Af=abR_sf
 \qquad(f\in E^\perp).
\]
Since $R_s$ is orthogonal, its restriction
$R_s:E^\perp\to F^\perp$
is an isometry. Hence $A|_{E^\perp}$ is $ab$ times an isometry, and
every singular value of $A:E^\perp\longrightarrow F^\perp$
is equal to $ab$.

It remains to study the two-dimensional restriction
$A:E\longrightarrow F$. Put
$h=(1-c^2)^{1/2}$.
Since
$\langle w_0,z\rangle=-c$,
the vectors
\[
 e_1:=w_0,
 \qquad
 e_2:=h^{-1}(z+c w_0)
\]
form an orthonormal basis of $E$. Similarly,
\[
 f_1:=w_0,
 \qquad
 f_2:=h^{-1}(w_1+c w_0)
\]
form an orthonormal basis of $F$. Then we have
$w_1=-cf_1+hf_2$.
Moreover,
\[
 R_se_2
 =\frac{w_0+c w_1}{h}
 =hf_1+cf_2.
\]
Since $D_ae_1=e_1$ and $D_ae_2=ae_2$, while
$D_bf_1=f_1$ and $D_bf_2=bf_2$, we obtain
\[
 Ae_1=-cf_1+bhf_2
\]
and
\[
 Ae_2=ahf_1+abcf_2.
\]
Thus, with respect to the orthonormal input basis $(e_1,e_2)$ of $E$
and the orthonormal output basis $(f_1,f_2)$ of $F$, the matrix of this
two-dimensional restriction is
\[
 M=
 \begin{pmatrix}
 -c&ah\\
 bh&abc
 \end{pmatrix}.
\]
Let $\sigma_1,\sigma_2$ be the singular values of $M$. Since both bases
are orthonormal, these are exactly the singular values of the restriction
$A:E\to F$. Their squares are the eigenvalues of $M^{\mathsf T}M$.
Hence
\[
 \sigma_1^2+\sigma_2^2
 =\operatorname{tr}(M^{\mathsf T}M),
\]
where $\text{tr}$ denotes the trace of a matrix. Using $h^2=1-c^2$, we obtain
\[
\begin{aligned}
 \operatorname{tr}(M^{\mathsf T}M)
 &=c^2+a^2h^2+b^2h^2+a^2b^2c^2\\
 &=a^2+b^2+(1-a^2)(1-b^2)c^2\\
 &=\Sigma_s.
\end{aligned}
\]
Also, $\det M
 =-ab(c^2+h^2)
 =-ab$, and therefore
\[
\sigma_1^2\sigma_2^2
=\det(M^{\mathsf T}M)
 =(\det M)^2
 =a^2b^2.
\]
Thus $\sigma_1^2$ and $\sigma_2^2$ are the two roots of
$X^2-\Sigma_sX+a^2b^2=0$. Let $\lambda_+$ denote the larger root, i.e.,
\[
 \lambda_+
 =\frac{\Sigma_s+\sqrt{\Sigma_s^2-4a^2b^2}}{2}.
\]

We now compare this two-dimensional block with the restriction to
$E^\perp$. Under the assumption $s^2\le q/4$, we have $0<a,b\le1$. Since the product of the two roots is $a^2b^2$, their larger root satisfies
$\lambda_+\ge ab$.
Moreover, $ab\ge a^2b^2$.
On $E^\perp$, all singular values of $A$ are equal to $ab$, so their
squares are $a^2b^2$. Hence
$\lambda_+$ is the largest squared singular value of $A$ on $\mathcal{H}$. Therefore
\[
 \|A\|_{\mathcal H\to\mathcal H}^2
 =\lambda_+
 =\frac{\Sigma_s+\sqrt{\Sigma_s^2-4a^2b^2}}{2}.
\]
Since $R_{2s}$ is orthogonal, we conclude that
\[
 \|\mathcal T_{2s}\mathcal T_s\|_{\mathcal H\to\mathcal H}
 =
 \left(
 \frac{\Sigma_s+\sqrt{\Sigma_s^2-4a^2b^2}}{2}
 \right)^{1/2}
 =\rho_s.
\]

We next prove \eqref{eq:two-contract}. Assume that
$s^2\le q/32$, or equivalently $q\ge32s^2$. Recall that, on $E^\perp$, every singular
value of $A=D_bR_sD_a$ is equal to $ab$, while on $E$ the matrix of $A$
with respect to the orthonormal bases chosen above is
\[
 M=
 \begin{pmatrix}
 -c&ah\\
 bh&abc
 \end{pmatrix},
 \qquad h=(1-c^2)^{1/2}.
\]
We estimate the norm of this two-dimensional block directly. Write
\[
 M=
 \begin{pmatrix}
 0&ah\\
 bh&0
 \end{pmatrix}
 +
 \begin{pmatrix}
 -c&0\\
 0&abc
 \end{pmatrix}
 =:M_0+M_1.
\]
Since $b>a>0$,
the singular values of $M_0$ are $ah$ and $bh$, and hence
$ \|M_0\|=bh\le b$.
Also, since $0<a,b\le1$, $\|M_1\|=\max\{c,abc\}=c$.
It follows from the triangle inequality for the operator norm that
\[
 \|M\|\le b+c.
\]
The singular value on $E^\perp$ is $ab\le b$, so the block decomposition
of $A$ gives
\[
 \rho_s=\|A\|_{\mathcal H\to\mathcal H}\le b+c.
\]
In the following, let us compare $b+c$ with  $2s/\sqrt q$. Note that
\[
\begin{aligned}
 \frac{2s}{\sqrt q}-b
 =\frac{2s(q-2s)-q(2s-1)}{\sqrt q\,(q-2s)}=\frac{q-4s^2}{\sqrt q\,(q-2s)}.
\end{aligned}
\]
The assumption $q\ge32s^2$ gives $q-4s^2\ge\frac78q$,
and therefore
\begin{align}\label{b:1}
\frac{2s}{\sqrt q}-b
 >\frac{7}{8\sqrt q}.
\end{align}
Moreover, $s\ge2$ implies $q\ge128$, and hence
\begin{align}\label{c:1}
 c=\frac1{q-1}<\frac1{8\sqrt q}.
\end{align}
Putting \eqref{b:1} and \eqref{c:1} together, we have
\[
\frac{2s}{\sqrt q}-b-c>\frac3{4\sqrt q}>0,
\]
and thus $b+c<\frac{2s}{\sqrt q}$. Therefore, we obtain $\rho_s<\frac{2s}{\sqrt q}$, which proves \eqref{eq:two-contract}.
\end{proof}

\begin{rem}
The exact expression \eqref{eq:rho} is stronger than what is needed for the
asymptotic estimate. Its value here is that it makes the two-step argument
transparent: the direction that is not contracted in the first step is almost
orthogonal to the direction that can avoid contraction in the second.
\end{rem}

\section{Proof of the main results}\label{sec:proof}

\begin{proof}[Proof of Theorem~\ref{thm:quant}]
Set
\[
 f_d=p_d-u.
\]
Since $\mathcal T_su=u$, equation \eqref{eq:p-transfer} gives
\[
 f_{d+1}
 =p_{d+1}-u
 =\mathcal T_{2^d}(p_d-u)
 =\mathcal T_{2^d}f_d.
\]
Moreover, $p_1=\delta_1$, and hence
\[
 \|f_1\|_2
 =\|\delta_1-u\|_2
 =\sqrt{1-\frac1q}
 <1.
\]

There are $k-1$ transfers from $f_1$ to $f_k$. Put
$r=\lfloor\frac{k-1}{2}\rfloor$.
We first group the initial $2r$ transfers into $r$ consecutive pairs,
whose starting indices are
$d=1,3,\ldots,2r-1$.
For the pair beginning at $d=2j+1$, where $0\le j\le r-1$, put
\[
 s=2^d=2^{2j+1}.
\]
Since $d\le k-2$, the assumption $n\ge2k+1$ gives
$n\ge2d+5$.
Therefore
\[
q=2^n\ge2^{2d+5}=32s^2,
\]
and Lemma~\ref{lem:two-exact} yields
\[
\begin{aligned}
\|f_{2j+3}\|_2
 = \|\mathcal T_{2^{2j+2}}\mathcal T_{2^{2j+1}} f_{2j+1}\|_2<
\frac{2\cdot2^{2j+1}}{\sqrt q}\,
 \|f_{2j+1}\|_2.
\end{aligned}
\]
Iterating this estimate for $j=0,\ldots,r-1$, we obtain
\[
\begin{aligned}
 \|f_{2r+1}\|_2
 <
 \left(
 \prod_{j=0}^{r-1}
 \frac{2\cdot2^{2j+1}}{\sqrt q}
 \right)\|f_1\|_2<
\prod_{j=0}^{r-1}
\frac{2\cdot2^{2j+1}}{\sqrt q}=
2^{r^2+r}q^{-r/2},
\end{aligned}
\]
where in the equality we used
$\sum_{j=0}^{r-1}(2j+2)=r^2+r$.

If $k-1=2r$, then $k=2r+1$, so $f_k=f_{2r+1}$ and hence
\[
 \|f_k\|_2<2^{r^2+r}q^{-r/2}.
\]
If $k-1=2r+1$, then one transfer remains and
\[
f_k=\mathcal T_{2^{k-1}}f_{2r+1}.
\]
Since
$2^{2k-2}<2^n=q$, by
Lemma~\ref{lem:norm}, one has
\[
\|\mathcal T_{2^{k-1}}\|_{\mathcal H\to\mathcal H}\le1.
\]
Therefore
\[
 \|f_k\|_2
\le\|\mathcal T_{2^{k-1}}\|_{\mathcal H\to\mathcal H}\cdot\|f_{2r+1}\|_2
<2^{r^2+r}q^{-r/2}.
\]
Thus, in both cases,
\begin{equation}\label{eq:fk-bound}
 \|p_k-u\|_2=\|f_k\|_2
 <2^{r^2+r}q^{-r/2}.
\end{equation}

Note from Lemma~\ref{lem:pzero} that $p_k(0)=\frac{N_{n,k}}{G_{n,k}}$,
whereas $u(0)=1/q$. Since the absolute value of any coordinate of a
vector is at most its Euclidean norm, \eqref{eq:fk-bound} gives
\[
 \left|
 \frac{N_{n,k}}{G_{n,k}}-\frac1q
 \right|
 \le
 \|p_k-u\|_2
 <2^{r^2+r}q^{-r/2}.
\]
Multiplying by $q=2^n$, we obtain
\[
 \left|
 \frac{2^nN_{n,k}}{G_{n,k}}-1
 \right|
 <
 2^{r^2+r}2^{n(1-r/2)},
\]
which proves \eqref{eq:mainbound}.

We next prove \eqref{eq:positive-half}. Assume $k\ge10$. Then $r\ge4$,
and the exponent
\[
 E(n,r):=r^2+r+n\left(1-\frac r2\right)
\]
in the right-hand side of \eqref{eq:mainbound} is decreasing as a
function of $n$. Hence it is enough to evaluate it at the smallest
allowed value $n=2k+1$.

If $k=2r+1$ is odd, then $n=4r+3$, and
\[
 E(4r+3,r)
 =-r^2+\frac72r+3.
\]
If $k=2r+2$ is even, then $n=4r+5$, and
\[
 E(4r+5,r)
 =-r^2+\frac52r+5.
\]
For $k\ge10$, both quantities are at most $-1$, and equality occurs only
when $(k,n)=(10,21)$. Since the two-step estimate
\eqref{eq:two-contract} is strict, \eqref{eq:mainbound} gives
\[
 \left|
 \frac{2^nN_{n,k}}{G_{n,k}}-1
 \right|
 <\frac12.
\]
Therefore
\[
 \frac{2^nN_{n,k}}{G_{n,k}}>\frac12,
\]
and hence
\[
 N_{n,k}>\frac{G_{n,k}}{2^{n+1}}.
\]
This proves \eqref{eq:positive-half}.

Finally, assume that
$\frac n3<k\le\frac{n-1}{2}$,
and put $m=\lfloor\frac n6\rfloor$. Then we have 
\[
 m\le r\le\frac n4
\]
since $r=\lfloor\frac{k-1}{2}\rfloor$. Let
\[
F_n(r):=n+r^2+r-\frac{nr}{2}
\]
be a function of $r$. Clearly,  $F_n$ is convex. Hence its maximum on
$[m,n/4]$ is attained at one of the two endpoints.

We may write $m=\frac n6-\delta$ for some $\delta$ with $0\le\delta<1$.
At the left endpoint, we compute that
\[
\begin{aligned}
 F_n(m)
 =
 -\frac{n^2}{18}
 +\left(\frac76+\frac{\delta}{6}\right)n
 +\delta^2-\delta\le
 -\frac{n^2}{18}+\frac{4n}{3}.
\end{aligned}
\]
At the right endpoint,
\[
 F_n(n/4)
 =-\frac{n^2}{16}+\frac{5n}{4}
 <-\frac{n^2}{18}+\frac{4n}{3}.
\]
Consequently,
\[
 F_n(r)
 <-\frac{n^2}{18}+\frac{4n}{3}
\]
throughout the range $n/3<k\le(n-1)/2$. Substituting this into
\eqref{eq:mainbound} gives
\[
 \sup_{n/3<k\le(n-1)/2}
 \left|
 \frac{2^nN_{n,k}}{G_{n,k}}-1
 \right|
 <
 2^{-n^2/18+4n/3},
\]
which proves \eqref{eq:uniformcentral}.
\end{proof}

\begin{proof}[Proof of Theorem~\ref{thm:carlet}]
By \eqref{eq:complement}, we may replace $k$ by $\min(k,n-k)$ and assume
$k\le n/2$. If $n$ is even and $k=n/2$, the subfield
$\F_{2^k}\subseteq\F_{2^n}$ is a zero-sum $k$-space, since inversion
permutes its nonzero elements and the sum of all elements of $\F_{2^k}$ is
zero.

In every remaining case we have $n\ge2k+1$. For $k\ge10$,
Theorem~\ref{thm:quant} implies $N_{n,k}>0$. It remains only to consider
$3\le k\le9$. Hou and Zhao \cite[Corollary~5.5]{HouZhaoFFA} proved the
required conclusion for every $3\le k\le12$ under the condition
$n\ge k+3$, which is automatic here. Thus $N_{n,k}>0$ throughout the full
range $3\le k\le n-3$. This proves the conjecture.
\end{proof}

\begin{rem}
The recursion of Theorem~\ref{thm:transfer} also provides an exact procedure
for computing the numbers $N_{n,k}$ when $n$ is fixed. Formula
\eqref{eq:raw} may be evaluated by an additive Fourier transform after the
nonzero field elements are permuted by inversion. This computational
observation is not needed for the proof.
\end{rem}

\section*{Acknowledgments}

The author is grateful to Domingo G\'omez-P\'erez for suggesting this problem
during the author's visit to the University of Cantabria in August 2025.

\end{document}